\begin{document}

\title{Transfinite Version of Welter's Game}

\author{Tomoaki Abuku\footnote{affiliation:University of Tsukuba} \footnote{mail:buku3416@gmail.com}}

\theoremstyle{definition} 
\newtheorem{theorem}{Theorem}[section]
\newtheorem{definition}[theorem]{Definition}
\newtheorem{lemma}[theorem]{Lemma}
\newtheorem{proposition}[theorem]{Proposition}
\newtheorem{corollary}[theorem]{Corollary}
\newtheorem{example}[theorem]{Example}
\newcommand{\mex}{\mathrm{\mex}}
\renewcommand{\labelitemiii}{\cdot}
\renewcommand{\labelenumi}{(\theenumi)}
\newcounter{linenumber}
\newcommand{\bracket}[1]{\left[#1 \right]}

\maketitle

\begin{abstract}
We study the transfinite version of Welter's Game, a combinatorial game played on a belt divided into squares numbered with general ordinal.
In particular, we give a straight-forward solution for the transfinite version, based on those of the transfinite version of Nim and the original version of Welter's Game.
\end{abstract}

{\bf Key words.} Combinatorial game, Impartial game, Transfinite game, Nim, Welter's game, Ordinal number\\

{\bf AMS 2000 subject classifications.} 05A99, 05E99

\section{Introduction}
\label{intro}
\subsection{Impartial game}
This paper discusses only ``impartial'' combinatorial games in normal form, that is games with the following characters:
\begin{itemize}
\item\ Two players alternately make a move.
\item\ No chance elements (the possible moves in any given position is determined in advance).
\item\ Both players have complete knowledge of the game states.
\item\ The game terminates in finitely many moves.
\item\ A player who makes the last move wins.
\item\ Both players have the same set of the possible moves in any position.
\end{itemize}

The original version of Nim and Welter's Game are ``short'' games (namely there are only a finite number of positions that can be reached from the initial position, and a position may never be repeated in a play).

\begin{definition}[outcome classes]
A game position is called an $\mathcal{N}$-position (resp. a $\mathcal{P}$-position) if the first player (resp. the second player) has a winning strategy.
\end{definition}

Clearly, all impartial game positions are classified into $\mathcal{N}$-positions or $\mathcal{P}$-positions.

\begin{theorem}[Bouton\cite{Bouton}]
If $G$ is an $\mathcal{N}$-position, there exists a move from $G$ to a $\mathcal{P}$-position.
If $G$ is a $\mathcal{P}$-position, there exists no move from $G$ to a $\mathcal{P}$-position.
\end{theorem}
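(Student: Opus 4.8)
The plan is to prove the theorem by induction on the finite game tree, relying on the defining property of $\mathcal{N}$- and $\mathcal{P}$-positions together with the determinacy statement made immediately after the definition (every position is exactly one of the two). Concretely, I would first recall what it means for a position to be an $\mathcal{N}$-position: the first player has a winning strategy, which must begin with some concrete move. I would argue that the very first move prescribed by that winning strategy sends $G$ to a position $G'$ from which the \emph{opponent} is now to move but the original first player still wins; hence $G'$ must be a $\mathcal{P}$-position. This establishes the first assertion.

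For the second assertion I would argue by contradiction. Suppose $G$ is a $\mathcal{P}$-position and yet there exists a move $G \rightarrow G'$ with $G'$ also a $\mathcal{P}$-position. From $G'$ the second-to-move player (i.e. the one who just moved into $G'$) would win, since $G'$ being a $\mathcal{P}$-position means the player \emph{not} about to move wins. But then the first player at $G$ could simply make the move into $G'$ and thereby occupy the winning role, contradicting the assumption that $G$ itself is a $\mathcal{P}$-position. The bookkeeping here is just the standard observation that making a move swaps the identity of ``the player to move''.

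The only genuine subtlety, and the step I would treat most carefully, is the base case and the well-foundedness that legitimizes the induction: because the game is short (finitely many reachable positions, no repetition), every play terminates, so terminal positions with no available move exist and are by definition $\mathcal{P}$-positions (the player to move, having no move, loses in normal form). I would make explicit that the absence of infinite descending play is exactly what lets strategies be defined recursively and lets the two outcome classes be assigned consistently to every position. Once that foundation is in place, both assertions reduce to unwinding the definitions, so I do not expect any hard computation — the main obstacle is conceptual, namely stating the strategy-stealing/strategy-following arguments precisely enough that ``the first player follows the prescribed move and then plays the winning strategy from $G'$'' is unambiguous.
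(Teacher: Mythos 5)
Your argument is correct: the first assertion follows by taking the opening move of the winning strategy at an $\mathcal{N}$-position (the continuation of that strategy makes the resulting position a $\mathcal{P}$-position), and the second follows by the strategy-stealing contradiction, with the mutual exclusivity of the two outcome classes and the well-foundedness of short games doing the foundational work exactly as you describe. The paper itself states this theorem as a known result of Bouton and gives no proof, so there is nothing to compare against; your argument is the standard one and fills that gap soundly.
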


\begin{definition}
Let $G$ and $G'$ be game positions. The notation $G \rightarrow G'$ means that $G'$ can be reached from $G$ by a single move.  
\end{definition}

\subsection{Nim and Grundy value}
Let us denote by $\mathbb{Z}$ the set of all integers and by $\mathbb{N}_{0}$ the set of all nonnegative integers. \\

Nim is a well-known impartial game with the following rules:
\begin{itemize}
\item\ It is played with several heaps of tokens.
\item\ The legal move is to remove any number of tokens (but necessarily at least one token) from any single heap.
\item\ The end position is the state of no heaps of tokens.
\end{itemize}

\begin{definition}[nim-sum]
The value obtained by adding numbers in binary form without carry is called nim-sum. The nim-sum of nonnegative integers $m_{1},\ldots,m_{n}$  is written as 
\begin{center}
$m_{1}\oplus \cdots \oplus m_{n}$.
\end{center}
\end{definition}

The set $\mathbb{N}_{0}$ is isomorphic to the direct sum of countably many $\mathbb{Z}/2\mathbb{Z}$'s.
Also, the nim-sum operation can be extended naturally on $\mathbb{Z}$ by using the 2's complement. 

\begin{definition}[minimum excluded number]
Let $T$ be a proper subset of $\mathbb{N}_{0}$. Then $\mathrm{mex}\ $$T$ is defined to be the least nonnegative integer not contained in $T$, namely
\begin{center}
$\mathrm{mex}\ T=\mathrm{min} (\mathbb{N}_0 \setminus T)$.
\end{center}
\end{definition}

\begin{definition}[Grundy value]
We denote the end position by $E$. Let $G$ be a game position.
The value $\mathcal{G}(G)$ is defined as follows:
\[
  \mathcal{G}(G)=\left \{ \begin{array}{cc}
    0 & (G=E) \\ 
   \mathrm{mex} \{\mathcal{G}(G')\mid G \rightarrow G'\} & (G \neq E).
  \end{array}\right.
\]
Moreover, $\mathcal{G}(G)$ is called the Grundy value of $G$.
\end{definition}

\begin{theorem}[Sprague\cite{Sprague}, Grundy\cite{Grundy}]
We have the following  for general short impartial games.
\begin{center}
$\mathcal{G}(G)\neq0$ $\Longleftrightarrow$ $G$ is an $\mathcal{N}$-position\\
$\mathcal{G}(G)=0$ $\Longleftrightarrow$ $G$ is a $\mathcal{P}$-position.
\end{center}
\end{theorem}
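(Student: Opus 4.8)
The plan is to prove both biconditionals at once by establishing the stronger pair of implications $\mathcal{G}(G)=0 \Rightarrow G$ is a $\mathcal{P}$-position and $\mathcal{G}(G)\neq 0 \Rightarrow G$ is an $\mathcal{N}$-position. Since every impartial game position is either an $\mathcal{N}$-position or a $\mathcal{P}$-position, but not both, the converse implications---and hence the two stated equivalences---follow immediately by exhaustiveness. The argument will proceed by well-founded induction on positions, which is legitimate precisely because the game is short: every play terminates in finitely many moves with no repetition, so one may induct on the length of the longest play available from $G$, taking the end positions $E$ as the base.

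Before the induction I would record two elementary properties of the Grundy number that are immediate from the definition of $\mathrm{mex}$. First, the set $\{\mathcal{G}(G'_1),\ldots,\mathcal{G}(G'_n)\}$ of option-values is finite, hence a proper subset of $\mathbb{N}_0$, so $\mathrm{mex}$ is well-defined. Then: (A) if $\mathcal{G}(G)=\mathrm{mex}\{\mathcal{G}(G'_1),\ldots,\mathcal{G}(G'_n)\}=0$, then $0$ is not among the option-values, so no single move from $G$ reaches a position of Grundy number $0$; and (B) if $\mathcal{G}(G)=k>0$, then by the minimality of the $\mathrm{mex}$ each of the values $0,1,\ldots,k-1$ does occur among the option-values, so in particular some move from $G$ reaches a position of Grundy number $0$.

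With these in hand the induction splits into two cases. For the base case $G=E$ the Grundy number is $0$ and the player to move has no move, hence loses, so $E$ is a $\mathcal{P}$-position, consistent with the claim. For the inductive step, assume the claim holds for every option of $G$. If $\mathcal{G}(G)=0$, then by (A) every option $G'$ satisfies $\mathcal{G}(G')\neq 0$, so by the induction hypothesis every $G'$ is an $\mathcal{N}$-position; whatever move the first player makes, the opponent inherits an $\mathcal{N}$-position and wins, so $G$ is a $\mathcal{P}$-position. If $\mathcal{G}(G)\neq 0$, then by (B) some option $G'$ has $\mathcal{G}(G')=0$, which by the induction hypothesis is a $\mathcal{P}$-position; the first player moves to $G'$ and wins, so $G$ is an $\mathcal{N}$-position.

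I expect the combinatorial core---properties (A) and (B)---to be entirely routine, being nothing more than unwinding the definition of $\mathrm{mex}$. The one point demanding care is the foundation of the induction: one must invoke the short-game hypothesis to furnish a genuinely well-founded rank on positions, since both the recursion defining $\mathcal{G}$ and the strategy argument rest on it. This mirrors exactly the structure of Bouton's theorem quoted above, with the zero-Grundy positions playing the role that the nim-sum-zero positions played there for Nim.
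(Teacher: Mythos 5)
Your proof is correct and complete: properties (A) and (B) are exactly the content of the $\mathrm{mex}$ definition, the induction is well-founded precisely because the game is short, and the partition of positions into $\mathcal{N}$- and $\mathcal{P}$-positions upgrades the two implications to the stated equivalences. The paper itself states this theorem without proof, citing Sprague and Grundy, and your argument is the standard one found in those sources and in the textbook literature, so there is nothing to reconcile.
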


Therefore, we only need to decide the Grundy value of positions for winning strategy in impartial games.

Grundy value is also useful for analysis of disjunctive sum.

If $G$ and $H$ are any two game positions, the disjunctive sum of $G$ and $H$ (written as $G + H$) is defined as follows: each player must make a move in either $G$ or $H$ (but not both) on his turn. 
\begin{theorem}[Sprague-Grundy Theorem \cite{Sprague}]
Let $G$ and $H$ be two game positions. Then
\begin{center}
$\mathcal{G}(G+H)=\mathcal{G}(G)\oplus \mathcal{G}(H)$.
\end{center}
\end{theorem}

\begin{theorem}[Grundy\cite{Grundy}]
The Grundy value of Nim position $(m_1,\ldots,m_n)$ is the following:
\begin{center}
$\mathcal{G}(m_1,\ldots,m_n)=m_1\oplus \cdots \oplus m_n$.
\end{center}
\end{theorem}

\subsection{Welter's Game and the Welter function}
Welter's Game is an impartial game investigated by Welter in 1954. Since it was also investigated by Mikio Sato, it is often called Sato's Game in Japan. The rules of Welter's Games are as follows:

 \begin{center}\label{fig:1}  
 \begin{tabular}{|c|c|c|c|c|c|c|c|c|c|c}
 \hline
 0&1&$\bullet$&$\bullet$&4&$\bullet$&6&$\bullet$&8&9&$\cdots$  \\\hline
 \end{tabular}
 \end{center} 

\begin{itemize}
\item\ It is played with several coins placed on a belt divided into
  squares numbered with the nonnegative integers
  $0,1,2,\ldots$ from the left as shown in Fig.~\ref{fig:1}.

\item\ The legal move is to move any one coin from its present square
  to any unoccupied square with a smaller number.
\item\ The game terminates when a player is unable to move a coin,
  namely, the coins are jammed in squares with the smallest possible
  numbers as shown in Fig.~\ref{fig:2}.
\end{itemize}

 \begin{center}\label{fig:2}
 \begin{tabular}{|c|c|c|c|c|c|c|c|c|c|c}
 \hline
 $\bullet$&$\bullet$&$\bullet$&$\bullet$&4&5&6&7&8&9&$\cdots$  \\\hline
 \end{tabular}
 \end{center}

This game is equivalent to Nim with an additional rule that you are not allowed to make two heaps with the same number of tokens.

In what follows, when an expression includes both nim-sum and the four basic operations of arithmetic without parentheses, we will make it a rule to calculate nim-sums prior to the others, and we express the nim-summation by the symbol $\displaystyle \sum_{}^{\oplus}$.

\begin{lemma}[Conway\cite{Conway}]
For integer $n$,
\begin{center}
$n\oplus(-1)=-1-n$.
\end{center}
\end{lemma}

\begin{definition}[mating function]
Mating function $(x \mid y)$ is defined by
\[
  (x \mid y)=\left \{\begin{array}{cc}
  2^{n+1} -1 & (x\equiv y \pmod{2^n},\quad x\not\equiv y \pmod{2^{n+1}})\\
-1 & (x=y).
  \end{array}\right.
\]
Particularly, if $x$ and $y$ have different parities, then $(x \mid y)=1$.
\end{definition}

Then we have the following:
\begin{center}
$(x \mid y)=(x-y)\oplus(x-y-1)$, and $(x \mid y)=(x+a \mid y+a)=(x \oplus a \mid y \oplus a)$.
\end{center}

\begin{definition}[animating function]
For any nonnegative integers $a,b,c,d,\ldots$, a function $f(x)$ of form 
\begin{center}
$f(x)=(((x \oplus a )+b)\oplus c)+d \oplus \cdots$ 
\end{center}
is called an animating function.
\end{definition}

If $f$ and $g$ are animating functions, $f(g(x))$ and $f^{-1}(x)$
are clearly animating functions.
Also, we have $f^{-1}(x)=((((\cdots x \cdots)-d)\oplus)-b)\oplus a$.
Thus, the set of all animating functions forms a group with respect to composition.

\begin{definition}[Welter function]
Let $(a_1,\ldots,a_n)$ be a Welter's Game position. Then we define the value $[a_1|\cdots|a_n]$ of Welter function at $(a_1,\ldots,a_n)$ as follows:
\begin{center}
$[a_1|\cdots|a_n]=a_1\oplus \cdots \oplus a_n\oplus \displaystyle \sum_{1\leq i < j\leq n}^{\oplus} (a_i \mid a_j)$.
\end{center}
\end{definition}

In the case of one coin, clearly $[a_1]=a_1$.
In the case of two coins，
\begin{center}
$[a_1|a_2]=a_1 \oplus a_2 \oplus(a_1 \mid a_2)=a_1 \oplus a_2 -1$.
\end{center}

Let $(a_1,\ldots,a_n)$ be a position in Welter's Game and $a_i$, $a_j$ the pair with the largest mating function value $(a_i \mid a_j)$ (that is, $a_i$ and $a_j$ are congruent to each other modulo the highest possible power of 2 among all pairs). Then mating function values $(a_i \mid a_k)$ and $(a_j \mid a_k)$ cancel each other for all other $a_k$'s. 

\begin{theorem}[Conway\cite{Conway}]
When we mate pairs with the largest mating function value in order，we have the following equality．For Welter function of $n$ arguments
\[ [a_1|\cdots|a_n] = \left \{\begin{array}{cc}
[a_1|a_2] \oplus [a_3|a_4] \oplus \cdots & (n:\mathrm{even})\\

[a_1|a_2] \oplus [a_3|a_4] \oplus \cdots \oplus [a_n] & (n:\mathrm{odd}),
  \end{array}\right.
\]
where $[a_1|a_2],[a_3|a_4],\ldots$ is arranged in order of the values of mating function. 
\end{theorem}

By using this equality and formulas $[a_1]=a_1$ and $[a_1|a_2]=a_1\oplus a_2-1$, we can easily compute the value of Welter function．

\begin{lemma}[Conway\cite{Conway}]\label{parities} 
$a_1>a_1',a_2>a_2',a_3>a_3',\ldots$ are legal moves in Welter's Game, we have the following:
\begin{center}
$[a_1'|a_2|a_3|\cdots]=[a_1|a_2'|a_3|\cdots] \Longleftrightarrow[a_1'|a_2'|a_3|\cdots]=[a_1|a_2|a_3|\cdots]$.
\end{center}
\end{lemma}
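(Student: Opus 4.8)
The plan is to exploit the animating-function calculus developed just above, treating the first coordinate as the variable while freezing $a_3,\dots,a_n$. Writing $\phi(x)=[x|a_2|a_3|\cdots|a_n]$ and $\psi(x)=[x|a_2'|a_3|\cdots|a_n]$, the definition of the Welter function expands as
\[
\phi(x)=x\oplus n_{\phi}\oplus(x\mid a_2)\oplus(x\mid a_3)\oplus\cdots\oplus(x\mid a_n),
\]
with the analogous expression for $\psi$ in which $a_2$ is replaced by $a_2'$; here $n_{\phi}$ collects the contributions that are constant in $x$. By the representation theorem for animating functions, both $\phi$ and $\psi$ are animating functions of $x$, and since the animating functions form a group under composition, $\theta:=\psi^{-1}\circ\phi$ is again animating. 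First I would rewrite the two sides of the claimed equivalence in these terms: the left equation $[a_1'|a_2|\cdots]=[a_1|a_2'|\cdots]$ is exactly $\phi(a_1')=\psi(a_1)$, i.e.\ $\theta(a_1')=a_1$, while the right equation $[a_1'|a_2'|\cdots]=[a_1|a_2|\cdots]$ is $\psi(a_1')=\phi(a_1)$, i.e.\ $a_1'=\theta(a_1)$.

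With this reformulation the lemma funnels into a single clean statement: $\theta(a_1')=a_1\iff\theta(a_1)=a_1'$. This holds as soon as $\theta$ is an involution, since for an involution $\theta(a_1')=a_1$ is the same as $a_1'=\theta^{-1}(a_1)=\theta(a_1)$. So the whole problem reduces to proving $\theta^2=\mathrm{id}$. As a sanity check, in the two-coin case the environment disappears, $\phi(x)=x\oplus a_2-1$ and $\psi(x)=x\oplus a_2'-1$, so $\theta(x)=x\oplus(a_2\oplus a_2')$ is translation by a constant in the $\mathbb{Z}/2\mathbb{Z}$-vector-space structure of the nim-sum, manifestly an involution, and both equations collapse to the transparent condition $a_1\oplus a_1'=a_2\oplus a_2'$.

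To prove $\theta^2=\mathrm{id}$ in general I would put $\theta$ into the normal form $\theta(x)=x\oplus m\oplus(x\mid r_1)\oplus\cdots\oplus(x\mid r_{\ell})$ guaranteed by the representation theorem, and then show that the composition $\theta\circ\theta$ telescopes to the identity. The engine for this is Lemma \ref{triangle}: for the points involved, the mating values come in equal pairs with a strictly larger third value, and the Conway identities $x\oplus(x\mid 0)=x-1$ and $x\oplus y\oplus(x\mid y)=x\oplus y-1$ let one resolve the mating terms produced by the composition against one another. I expect this cancellation to be the main obstacle, for two reasons: the group of animating functions is non-abelian, so $\theta=\psi^{-1}\circ\phi$ is \emph{not} obtained by simply nim-adding the normal forms of $\psi^{-1}$ and $\phi$, and one must track carefully how the shared mating points $a_3,\dots,a_n$ interact with the differing points $a_2,a_2'$.

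Finally I would record the bookkeeping needed for legality. Throughout, the hypothesis that $a_1>a_1'$ and $a_2>a_2'$ are legal moves guarantees that each of the four positions consists of pairwise distinct squares, so every mating value occurring is a genuine $2^{k}-1$ rather than the degenerate value $-1$; this keeps $\theta$ an honest bijection on the relevant points and rules out the spurious coincidences that arise when two coins share a square. An equivalent, more hands-on route that I would keep in reserve is to compute the nim-difference of the two sides directly: writing $S_A=[a_1'|a_2|\cdots]\oplus[a_1|a_2'|\cdots]$ and $S_B=[a_1'|a_2'|\cdots]\oplus[a_1|a_2|\cdots]$, all contributions of $a_3,\dots,a_n$ cancel and one is left with $S_A\oplus S_B=(a_1\mid a_2)\oplus(a_1\mid a_2')\oplus(a_1'\mid a_2)\oplus(a_1'\mid a_2')$, so the lemma becomes $S_A=0\iff S_B=0$; here the crux is to show that $S_A=0$ already forces this last nim-sum to vanish, which I would again derive from Lemma \ref{triangle}.
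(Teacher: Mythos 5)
The paper itself states this lemma without proof, citing Conway, so your argument has to stand on its own. Your reformulation is correct and tidy: with $\phi(x)=[x|a_2|a_3|\cdots]$ and $\psi(x)=[x|a_2'|a_3|\cdots]$ the two equations become $\theta(a_1')=a_1$ and $\theta(a_1)=a_1'$ for $\theta=\psi^{-1}\circ\phi$, and the constant term is indeed $n_\phi=[a_2|\cdots|a_n]$. The difficulty is that everything now hangs on $\theta^2=\mathrm{id}$, which you do not prove, and which is not a side computation: since $F(x,y)=[x|y|a_3|\cdots]$ is symmetric in its first two arguments, the statement ``$F(z,a_2')=F(x,a_2)$ implies $F(x,a_2')=F(z,a_2)$'' \emph{is} the lemma, so the involution claim is an equivalent restatement rather than a reduction to something easier. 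Your proposed engine is also underspecified. Because animating functions preserve the mating function, $(g(x)\mid p)=(x\mid g^{-1}(p))$, composition in normal form gives $f(g(x))=x\oplus(m\oplus n)\oplus\sum^{\oplus}(x\mid q_j)\oplus\sum^{\oplus}(x\mid g^{-1}(p_i))$; hence $\theta$ carries the mating points $a_2,\ldots,a_n$ together with $\phi^{-1}\psi(a_2'),\phi^{-1}\psi(a_3),\ldots$, and $\theta^2=\mathrm{id}$ is precisely the assertion that $\theta$ permutes its own multiset of mating points. That is the whole content of the lemma, and ``telescoping via Lemma \ref{triangle}'' does not yet supply it.

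The reserve route has the same gap in another guise. The identity $S_A\oplus S_B=(a_1\mid a_2)\oplus(a_1\mid a_2')\oplus(a_1'\mid a_2)\oplus(a_1'\mid a_2')$ is correct (every term involving $a_3,\ldots,a_n$ occurs twice in the fourfold sum), but the right-hand side is \emph{not} identically zero: for $a_1=4$, $a_1'=2$, $a_2=1$, $a_2'=0$ it equals $1\oplus7\oplus1\oplus3=4$, and correspondingly $[2|1]=2\neq3=[4|0]$ and $[2|0]=1\neq4=[4|1]$, so there the lemma holds with both sides false. You therefore need the conditional implication $S_A=0\Rightarrow(a_1\mid a_2)\oplus(a_1\mid a_2')\oplus(a_1'\mid a_2)\oplus(a_1'\mid a_2')=0$, and Lemma \ref{triangle}, which only compares the three mating values of a single triple, does not by itself control this fourfold sum over two pairs. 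One further slip: individual legality of the moves $a_1>a_1'$ and $a_2>a_2'$ does not prevent $a_1'=a_2'$, so the degenerate value $(a_1'\mid a_2')=-1$ can genuinely occur in $[a_1'|a_2'|a_3|\cdots]$; the statement has to be read (as in Conway) with the Welter function defined on multisets, and your bookkeeping paragraph asserting that all mating values are of the form $2^k-1$ is incorrect.
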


\begin{theorem}[Conway\cite{Conway}]
Let $[a_1|\cdots|a_n]=s$ and let $s'$ be an integer. Welter function is an animating function with respect to each of its arguments, and an animating function is a bijection on $\mathbb{Z}$, so each of the equations
\begin{center}
$[a_1|\cdots|a_{i-1}|x|a_{i+1}|\cdots|a_n]=s'$ ($i=1,\cdots,n$) 
\end{center}
for the integers $x$ has a unique solution $x=a'_i$.
Moreover, if $s>s'$, then there is an index $i$ such that $a_i>a'_i$.
\end{theorem}

\begin{theorem}[Welter's Theorem\cite{Welter}]
The value of Welter function at each position in Welter's Game is equal to its Grundy value in Welter's Game. Namely, we have the following:
\begin{center}
$\mathcal{G}(a_1,\ldots,a_n)=[a_1|\ldots|a_n]$.
\end{center}
\end{theorem}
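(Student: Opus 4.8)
The plan is to prove the identity $\mathcal{G}(a_1,\ldots,a_n)=[a_1|\cdots|a_n]$ by induction on the game, say on $a_1+\cdots+a_n$, so that by the Sprague--Grundy theorem together with the induction hypothesis every option $G'$ already satisfies $\mathcal{G}(G')=[\,G'\,]$. Writing $W=[a_1|\cdots|a_n]$, the claim $\mathcal{G}(G)=W$ then reduces, through the definition of the Grundy number as a $\mathrm{mex}$, to two combinatorial conditions: (i) no legal move from $G$ produces a position whose Welter value equals $W$; and (ii) for every integer $v$ with $0\le v<W$ there is a legal move from $G$ to a position of Welter value exactly $v$. Verifying (i) and (ii) is the entire content of the theorem.

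The observation driving both conditions is that, with all coins but the $i$-th held fixed, the Welter value is an animating function of $a_i$: indeed
\[
[a_1|\cdots|a_n]=a_i\oplus N_i\oplus \bigoplus_{j\neq i}(a_i\mid a_j),
\]
where $N_i$ collects the terms not involving $a_i$, and this is exactly the normal form in the characterization of animating functions. Call this function $f_i$. Since animating functions form a group under composition, each $f_i$ is a bijection of $\mathbb{Z}$. Condition (i) is then immediate: a value-preserving move of coin $i$ to $a_i'<a_i$ would give $f_i(a_i')=f_i(a_i)$, and injectivity forces $a_i'=a_i$, contradicting that a legal move strictly decreases a coordinate.

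For condition (ii) I would fix $v<W$ and use bijectivity to locate, for each $i$, the unique square $x_i=f_i^{-1}(v)$ that would realize the value $v$ by moving coin $i$. If some $x_i$ is a legal target --- a nonnegative integer, strictly smaller than $a_i$, and distinct from the other coins --- then that move witnesses $v$ and we are done. The difficulty is that the unique algebraic preimage need not be a legal square, and the crux is to show that one can always reroute: either pick a different coin whose preimage is legal, or transfer the move between two coins while preserving the resulting Welter value. This is precisely where Lemma \ref{parities} enters, since it converts an equality of values after moving one coin into the corresponding equality after moving another, and where the splitting identity lets one isolate the top mating pair and run a secondary induction on the number of coins $n$.

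I expect condition (ii) to be the main obstacle. Condition (i) and the algebraic identities above are clean consequences of the group structure of animating functions, but (ii) demands a genuine combinatorial argument: one must show that the target squares $x_i$ collectively cover every value below $W$ through legal moves, handling the cases where a natural target falls outside $\{0,\ldots,a_i-1\}$ or collides with an occupied square. The plan is to organize this by the highest binary digit in which $v$ and $W$ differ, using that digit to select which coin to move and invoking Lemma \ref{parities} together with splitting to repair illegal targets; the base cases $n=1$ (where $[a_1]=a_1$ is the single-heap Nim value) and $n=2$ (where $[a_1|a_2]=a_1\oplus a_2-1$) anchor the induction.
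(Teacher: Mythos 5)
The paper itself offers no proof of this theorem --- it is imported from Welter and Conway with a citation --- so your proposal has to be judged on its own terms rather than against an argument in the text. The architecture you set up is correct: induction over the game tree reduces the claim to showing (i) that no legal move preserves the value $W=[a_1|\cdots|a_n]$ and (ii) that every $v$ with $0\le v<W$ is the value of some option, and your proof of (i) is complete: with the other coins fixed the value is $x\mapsto x\oplus N_i\oplus\bigoplus_{j\ne i}(x\mid a_j)$, which by the characterization theorem is an animating function and hence a bijection of $\mathbb{Z}$, so no move of coin $i$ can reproduce the value $W$.

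The genuine gap is condition (ii), which is not a technical afterthought but the entire content of Welter's theorem, and your text only describes a strategy for it. Two of the three failure modes you list are in fact already excluded by the remark closing Section 2.2.1: for $v\ge 0$ the unique $x_i$ solving the equation obtained by replacing $a_i$ with an unknown is automatically a nonnegative integer distinct from the other coins. The only thing that can go wrong is $x_i>a_i$, so the whole theorem reduces to proving that $x_i<a_i$ for at least one index $i$ --- and that is exactly the step you do not carry out. Your plan (split off the top mating pair, induct on $n$, choose the coin according to the highest binary digit in which $v$ and $W$ differ, and use Lemma \ref{parities} to transfer a move from one coin to another) is indeed the shape of Conway's argument, so the route is viable rather than wrong; but none of it is executed: you do not say how the distinguished digit selects the coin, why the transferred move lands on a smaller unoccupied square, or how the case analysis terminates. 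As written, the proposal establishes $\mathcal{G}(a_1,\ldots,a_n)\le[a_1|\cdots|a_n]$ (since $W$ is excluded from the option values) but not the reverse inequality.
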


\section{Transfinite Game}
\subsection{Transfinite Nim}
First, we extend Nim into its transfinite version (Transfinite Nim) by allowing the size of the heaps of tokens to be a general ordinal number.
The legal move is to replace an arbitrary ordinal number $\alpha$ by a smaller number $\beta$.
Therefore, Transfinite Nim may not necessarily be short.

Let us denote by $\mathcal{ON}$ the class of all ordinal numbers.
Later we see that the nim-sum operation can be extended naturally on $\mathcal{ON}$. 

The following is known about general ordinal numbers.

\begin{theorem}[Cantor Normal Form theorem\cite{Jech}]
Every $\alpha \in \mathcal{ON}  (\alpha >0)$ can be expressed as
\begin{center}
$\alpha=\omega^{\gamma_k} \cdot m_{k}+\cdots+\omega^{\gamma_{1}} \cdot m_{ 1}+\omega^{\gamma_0} \cdot m_{0}$,
\end{center}
where $k$ is a nonnegative integer, $m_0,\ldots,m_k \in \mathbb{N}_0 \setminus\{0\}$, and $\alpha \geq \gamma_k>\cdots>\gamma_1>\gamma_0\geq0$.
\end{theorem}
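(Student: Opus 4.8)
The plan is to argue by transfinite induction on $\alpha$, taking as given the two standard facts about ordinal arithmetic that underlie the statement: the division algorithm for ordinals (for ordinals $\alpha$ and $\delta>0$ there are unique ordinals $q,r$ with $r<\delta$ and $\alpha=\delta\cdot q+r$), and the fact that $\gamma\mapsto\omega^{\gamma}$ is a strictly increasing, continuous function satisfying $\omega^{\gamma}\geq\gamma$ for every $\gamma$. The base of the induction is $\alpha=1=\omega^{0}\cdot 1$, which is already in the required form with $k=0$, $\gamma_0=0$, $m_0=1$.

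For the inductive step, I would assume the claim for every positive ordinal below $\alpha$ and first isolate the leading exponent. Since $\omega^{0}=1\leq\alpha$ and, by $\omega^{\beta}\geq\beta$, we have $\omega^{\alpha+1}\geq\alpha+1>\alpha$, the class $\{\gamma:\omega^{\gamma}\leq\alpha\}$ is a nonempty set of ordinals bounded above by $\alpha+1$. Using continuity of $\gamma\mapsto\omega^{\gamma}$ at limit stages one checks that this (downward closed) set attains its supremum, so it has a largest element $\gamma_k$; by construction $\omega^{\gamma_k}\leq\alpha<\omega^{\gamma_k+1}$, and hence $\gamma_k\leq\omega^{\gamma_k}\leq\alpha$. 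Applying the division algorithm with divisor $\delta=\omega^{\gamma_k}$ yields $\alpha=\omega^{\gamma_k}\cdot m_k+r$ with $r<\omega^{\gamma_k}$. Here $m_k\geq 1$ because $\omega^{\gamma_k}\leq\alpha$, and $m_k<\omega$ because $m_k\geq\omega$ would force $\alpha\geq\omega^{\gamma_k}\cdot\omega=\omega^{\gamma_k+1}$, contradicting the maximality of $\gamma_k$; thus $m_k\in\mathbb{N}_0\setminus\{0\}$.

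It then remains to expand the remainder. If $r=0$, the representation $\alpha=\omega^{\gamma_k}\cdot m_k$ is already of the desired form. If $r>0$, then $r<\omega^{\gamma_k}\leq\alpha$, so the induction hypothesis applies and, after relabeling, gives $r=\omega^{\gamma_{k-1}}\cdot m_{k-1}+\cdots+\omega^{\gamma_0}\cdot m_0$ with every $m_i\in\mathbb{N}_0\setminus\{0\}$ and $\gamma_{k-1}>\cdots>\gamma_0\geq 0$. The only inequality still to verify is $\gamma_{k-1}<\gamma_k$: from $\omega^{\gamma_{k-1}}\leq\omega^{\gamma_{k-1}}\cdot m_{k-1}\leq r<\omega^{\gamma_k}$ and strict monotonicity of exponentiation I obtain $\gamma_{k-1}<\gamma_k$. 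Substituting this expansion into $\alpha=\omega^{\gamma_k}\cdot m_k+r$ produces the claimed Cantor normal form, with $\alpha\geq\gamma_k$ as already noted.

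The main obstacle is the leading-exponent step rather than the bookkeeping. One must confirm both that the largest $\gamma$ with $\omega^{\gamma}\leq\alpha$ genuinely exists (which is where continuity of $\gamma\mapsto\omega^{\gamma}$ enters, ensuring that the supremum of the relevant set is attained) and that the resulting quotient $m_k$ is finite; together these pin down a single leading term $\omega^{\gamma_k}\cdot m_k$ and guarantee $r<\alpha$, which is exactly what makes the induction well-founded. Uniqueness of the representation, although not asserted in the statement, would follow by the same induction, since the pair $(\gamma_k,m_k)$ is forced by $\omega^{\gamma_k}\leq\alpha<\omega^{\gamma_k+1}$ together with the uniqueness clause of the division algorithm.
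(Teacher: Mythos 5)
Your proof is correct: locating the maximal exponent $\gamma_k$ with $\omega^{\gamma_k}\leq\alpha$ via continuity of $\gamma\mapsto\omega^{\gamma}$, extracting a finite nonzero leading coefficient from the division algorithm, and recursing on the remainder $r<\omega^{\gamma_k}\leq\alpha$ is exactly the standard argument, and each step you flag (attainment of the supremum, finiteness of $m_k$, $\gamma_{k-1}<\gamma_k$) is justified correctly. Note that the paper itself gives no proof of this statement --- it is quoted as background from the cited reference (Jech, \emph{Set Theory}) --- so there is nothing internal to compare against; your argument is essentially the one found in that reference.
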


Let $\alpha_1,\ldots,\alpha_n$ be ordinal numbers.
Then, each $\alpha_i$, $i=1,\ldots,n$ is expressed by using finite by many common powers $\gamma_0,\ldots,\gamma_k$ as: 
\begin{center}
$\alpha_i=\omega^{\gamma_k} \cdot m_{ik}+\cdots+\omega^{\gamma_{1}} \cdot m_{ i1}+\omega^{\gamma_0} \cdot m_{i0}$,
\end{center}
where $m_{ik}\in \mathbb{N}_0$.

Next, we will define the minimal excluded number of a set of ordinals and the Grundy value of a position in general Transfinite Game.

\begin{definition}[minimal excluded number]
Let $T$ be a proper subclass of $\mathcal{ON}$. Then $\mathrm{mex}\ $$T$ is defined to be the least ordinal number not contained in $T$, namely
\begin{center}
$\mathrm{mex}\ T=\mathrm{min} (\mathcal{ON}\setminus T)$.
\end{center}
\end{definition}

\begin{definition}[Grundy value]
Let $G$ be an impartial game (it may not necessarily be short) and  $E$ be the end position. The value $\mathcal{G}(G)$ is defined as 
\[
  \mathcal{G}(G)=\left \{ \begin{array}{cc}
    0 & (G=E) \\ 
   \mathrm{mex} \{\mathcal{G}(G')\mid G \rightarrow G'\} & (G \neq E).
  \end{array}\right.
\]
\end{definition}

\begin{theorem}
We have the following for Transfinite impartial games:
\begin{center}
$\mathcal{G}(G)\neq0$ $\Longleftrightarrow$ $G$ is an $\mathcal{N}$-position\\
$\mathcal{G}(G)=0$ $\Longleftrightarrow$ $G$ is a $\mathcal{P}$-position.
\end{center}
\end{theorem}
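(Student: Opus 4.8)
The plan is to follow the classical Sprague--Grundy argument, but to replace the induction on the finite game tree by well-founded induction on the move relation $\rightarrow$. The crucial observation is that, although a Transfinite Game need not be short, our standing assumption that every play terminates in finitely many moves means there is no infinite sequence $G_0 \rightarrow G_1 \rightarrow G_2 \rightarrow \cdots$. Hence $\rightarrow$ is well-founded, and the recursive clause $\mathcal{G}(G)=\mathrm{mex}\{\mathcal{G}(G')\mid G\rightarrow G'\}$ genuinely determines a unique ordinal-valued function by well-founded recursion. The $\mathrm{mex}$ is always defined because the class $T=\{\mathcal{G}(G')\mid G\rightarrow G'\}$ of Grundy values of the options is a set, hence a proper subclass of $\mathcal{ON}$, and $\mathcal{ON}$ is well-ordered so that $\mathrm{min}(\mathcal{ON}\setminus T)$ exists.

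From the definition $\mathrm{mex}\ T=\mathrm{min}(\mathcal{ON}\setminus T)$ I would first extract two facts, the transfinite analogues of Bouton's Theorem. First, if $\mathcal{G}(G)=0$ then no option $G'$ satisfies $\mathcal{G}(G')=0$; for otherwise $0$ would lie in $T$, contradicting $0=\mathrm{min}(\mathcal{ON}\setminus T)$. Second, if $\mathcal{G}(G)\neq 0$ then some option $G'$ satisfies $\mathcal{G}(G')=0$; indeed $0<\mathrm{min}(\mathcal{ON}\setminus T)$ forces $0\notin \mathcal{ON}\setminus T$, i.e.\ $0\in T$. Both proofs are immediate once this reading of $\mathrm{mex}$ is in place, and they are insensitive to whether $T$ is finite.

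With these in hand I would describe the strategies explicitly. If $\mathcal{G}(G)\neq 0$, the first player moves to a position of Grundy value $0$, which exists by the second fact; thereafter, whenever the opponent faces a position of Grundy value $0$ and can move, the first fact forces the opponent onto a nonzero value, and the second fact lets the first player restore Grundy value $0$. If instead $\mathcal{G}(G)=0$, the symmetric reasoning makes the second player the one who perpetually restores $0$. To see that maintaining the invariant $\mathcal{G}=0$ for the opponent is actually \emph{winning}, I would invoke finiteness of plays once more: the end position $E$ has $\mathcal{G}(E)=\mathrm{mex}\,\emptyset=0$, and the strategy-follower only ever hands positions of Grundy value $0$ to the opponent, so the follower is never the one stranded at an end position; since the play terminates, it is the opponent who eventually cannot move and loses.

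I expect the main obstacle to be conceptual rather than computational. The delicate point is that a strategy which merely \emph{preserves} the invariant $\mathcal{G}=0$ would, in a game permitting infinite play, only guarantee a draw or an unending game; it is precisely the well-foundedness of $\rightarrow$, furnished by the finiteness assumption, that forbids the opponent from stalling indefinitely at nonzero Grundy values and thereby upgrades ``invariant-preserving'' to ``winning.'' Once this reliance on finiteness is isolated and justified, the remaining steps are the routine $\mathrm{mex}$-manipulations of the classical proof, now read over $\mathcal{ON}$ in place of $\mathbb{N}_0$.
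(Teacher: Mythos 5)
Your argument is correct: the paper states this theorem without giving any proof, and your write-up supplies exactly the standard Sprague--Grundy argument it implicitly relies on. You correctly isolate the two points that need care once the short-game hypothesis is dropped --- that the retained ``terminates in finitely many moves'' axiom makes $\rightarrow$ well-founded (so the recursion defining $\mathcal{G}$ and the inductive strategy argument both go through), and that the option class is a set so $\mathrm{mex}$ over $\mathcal{ON}$ is defined --- and the rest is the classical $\mathrm{mex}$ manipulation. Nothing is missing.
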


\begin{definition}
For ordinal numbers $\alpha_1,\ldots,\alpha_n \in \mathcal{ON}$, we define their nim-sum 
as follows:
\begin{center}
$\alpha_1 \oplus \cdots \oplus \alpha_n=\displaystyle \sum_{k}^{}  \omega^{\gamma_k}(m_{1k}\oplus \cdots \oplus m_{nk})$.
\end{center}
\end{definition}

\begin{theorem}
For Transfinite Nim position $(\alpha_1,\ldots,\alpha_n)\subseteq \mathcal{ON}^n$，we have the following:
\begin{center}
$\mathcal{G}(\alpha_1,\ldots,\alpha_n)=\alpha_1 \oplus \cdots \oplus \alpha_n$.
\end{center}
\end{theorem}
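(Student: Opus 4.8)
The plan is to prove the identity by transfinite induction on the game position, paralleling the Sprague--Grundy argument for finite Nim but carried out level-by-level in the Cantor normal form. First I would record two structural facts about the ordinal nim-sum. Since the operation acts coordinatewise on the coefficients $m_{il}$ via the ordinary nim-sum on $\mathbb{N}_0$, and $(\mathbb{N}_0,\oplus)$ is an abelian group of exponent two, the ordinal nim-sum makes the relevant ordinals into an abelian group in which every element is its own inverse; in particular $\alpha \oplus \beta = 0$ if and only if $\alpha = \beta$, and replacing $\alpha_i$ by $\alpha_i'$ changes the total nim-sum from $S := \alpha_1 \oplus \cdots \oplus \alpha_n$ to $S \oplus \alpha_i \oplus \alpha_i'$. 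Second, I would note that the game is well-founded: a move strictly decreases one coordinate, hence strictly decreases the natural (Hessenberg) sum of the coordinates, and the ordinals are well-ordered, so there is no infinite play and the transfinite induction is legitimate.

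By the inductive hypothesis the Grundy value of every option equals its nim-sum, so it suffices to show that
\[
\mathrm{mex}\{\,S \oplus \alpha_i \oplus \alpha_i' : 1 \le i \le n,\ \alpha_i' < \alpha_i\,\} = S .
\]
This reduces to the two familiar assertions: (i) no option has nim-sum $S$, and (ii) every ordinal $\beta < S$ occurs as the nim-sum of some option. Assertion (i) is immediate from the group structure, since $S \oplus \alpha_i \oplus \alpha_i' = S$ forces $\alpha_i = \alpha_i'$, contradicting $\alpha_i' < \alpha_i$.

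For (ii), fix $\beta < S$ and write $S = \sum_l \omega^{\gamma_l} s_l$ and $\beta = \sum_l \omega^{\gamma_l} b_l$ over a common tower of exponents $\gamma_k > \cdots > \gamma_0$, where $s_l = m_{1l} \oplus \cdots \oplus m_{nl}$. Comparing Cantor normal forms, let $\gamma_j$ be the largest exponent at which $S$ and $\beta$ differ; then $b_l = s_l$ for $l > j$ and, since $\beta < S$, we have $b_j < s_j$ as natural numbers. Put $D := S \oplus \beta$, so that $D = \sum_{l \le j} \omega^{\gamma_l}(s_l \oplus b_l)$ with leading coefficient $d_j := s_j \oplus b_j \neq 0$. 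Now I run the finite Nim argument inside level $j$: the top set bit $h$ of $d_j$ is exactly the top bit at which $s_j$ and $b_j$ differ, and because $b_j < s_j$ this is a bit where $s_j$ carries a $1$. As $s_j = m_{1j} \oplus \cdots \oplus m_{nj}$, some index $i$ has a $1$ in bit $h$ of $m_{ij}$; for that $i$ one checks that $m_{ij} \oplus d_j < m_{ij}$, and since $D$ has no term above $\omega^{\gamma_j}$ this yields $\alpha_i \oplus D < \alpha_i$. Taking $\alpha_i' := \alpha_i \oplus D < \alpha_i$ gives a legal move whose nim-sum is $S \oplus \alpha_i \oplus \alpha_i' = S \oplus D = \beta$, establishing (ii) and hence the theorem.

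The main obstacle is organizing the two-tier comparison correctly: ordinals are ordered first by the highest exponent at which their Cantor normal forms differ and only then by the binary structure of the coefficient there, so the classical highest-bit argument must be applied at the correct level $\gamma_j$ after the leading exponents have been matched. Once the reduction to the single coefficient $d_j$ is made cleanly, the remaining bit-manipulation is verbatim the finite case; the care lies in verifying that altering $\alpha_i$ only at and below $\omega^{\gamma_j}$ genuinely decreases it, which is where the absence of higher-order terms in $D$ is used.
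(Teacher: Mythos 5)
Your proof is correct and follows essentially the same route as the paper: locate the highest Cantor-normal-form exponent $\gamma_j$ at which the target $\beta$ differs from $S$, run the classical Bouton highest-bit argument on the coefficient there to find the index $i$, and modify $\alpha_i$ only at levels $\le \gamma_j$ so that the move is legal and produces nim-sum $\beta$. Your packaging via $D = S \oplus \beta$ and $\alpha_i' = \alpha_i \oplus D$ is a cleaner formulation of the paper's coefficient-by-coefficient replacement, and you additionally make explicit the half of the mex argument (no option has nim-sum $S$) that the paper leaves implicit.
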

\begin{proof}
The proof is by induction.
Let $\alpha_1 \oplus \cdots \oplus \alpha_n=\alpha$ $(\alpha \in \mathcal{ON})$.
We have to show that, for each $\beta$ ($< \alpha$), there exists a position reached by a single move from $(\alpha_1,\ldots,\alpha_n)$ and that its Grundy value is $\beta$.

Let $(\alpha_1,\ldots,\alpha_n) \rightarrow (\beta_1,\ldots,\beta_n)$,
by  induction assumption we have
\begin{center}
$\mathcal{G}(\beta_1,\ldots,\beta_n)=\beta_1 \oplus \cdots \oplus \beta_n$.
\end{center}

If $\alpha=0$, no ordinal $\beta$ ($\beta < \alpha$) exists.
We can assume $\alpha > 0$.

We can write $\alpha$ and $\beta$ as
\begin{center}
$\alpha=\omega^{\gamma_k} \cdot a_k+\cdots+\omega^{\gamma_k} \cdot a_1+a_{0}$\\
$\beta= \omega^{\gamma_k} \cdot b_k+\cdots+\omega^{\gamma_k} \cdot b_1+b_{0}$,
\end{center}
where $a_0,\ldots,a_k, b_0,\ldots,b_k \in \mathbb{N}_0$.
By definition,
\begin{center}
$a_s=m_{1s}\oplus \cdots \oplus m_{ns}$, for $s=1,\ldots,k$.
\end{center}

Since $\alpha>\beta$, there exsists $s$ such that  
\begin{center}
$a_s > b_s$, $a_t=b_t$  for all $t$ $(< s)$.
\end{center}

As in the strategy of original Nim, since $a_s > b_s$, there is an index $i$ such that
\begin{center}
$m_{is} > m_{is}\oplus a_s \oplus b_s$.
\end{center}

We define 
\begin{center}
$m'_{it}=m_{it}\oplus a_s \oplus b_s$ for all $t$ $(\leq s)$
\end{center}
and
\begin{align*}
\alpha'_i&=\omega^{\gamma_k} \cdot m_{ik}+\cdots\omega^{\gamma_s+1} \cdot m_{is+1}+\omega^{\gamma_s} \cdot m'_{is}\\
&+\omega^{\gamma_s-1} \cdot m'_{is-1}+\cdots+\omega^{\gamma_0} \cdot m'_{i0},
\end{align*}
where $m_{is}\oplus a_s \oplus b_s=m'_{is}$.

If we put $\alpha'_i=\beta_i$, $\alpha_j=\beta_j$ $(j \neq i)$.
Then, $\alpha_i > \beta_i$ and  we have
\begin{center}
$\beta_1 \oplus \cdots \beta_{i-1}\oplus \beta_i\oplus \beta_{i+1}\oplus \cdots \beta_n=\beta$
\end{center}

Therefore, for each $\beta$ ($< \alpha$), there is a position $(\beta_1,\ldots,\beta_n)$ reached by a single move from $(\alpha_1,\ldots,\alpha_n)$.
\end{proof}

\begin{example}
In the case of position $(1,\omega\cdot 2+4,\omega^{2}\cdot 3+9,\omega^{2}\cdot 2+\omega \cdot
4+16,\omega^{2}+\omega \cdot 5+25)$:

Let us calculate the value of $\alpha_{1}\oplus\alpha_{2}\oplus\alpha_{3}\oplus\alpha_{4}\oplus\alpha_{5}$.

We get
\begin{align*}
\alpha_1&=\omega^{\beta_{2}}\cdot m_{12}+\omega^{\beta_{1}}\cdot m_{11}+m_{10}
=\omega^{2}\cdot 0+\omega\cdot 0+1\\
\alpha_2&=\omega^{\beta_{2}}\cdot m_{22}+\omega^{\beta_{1}}\cdot m_{21}+m_{20}
=\omega^{2}\cdot 0+\omega\cdot 2+4\\
\alpha_3&=\omega^{\beta_{2}}\cdot m_{32}+\omega^{\beta_{1}}\cdot m_{31}+m_{30}
=\omega^{2}\cdot 3+\omega\cdot 0+9\\
\alpha_4&=\omega^{\beta_{2}}\cdot m_{42}+\omega^{\beta_{1}}\cdot m_{41}+m_{40}
=\omega^{2}\cdot 2+\omega\cdot4+16\\
\alpha_5&=\omega^{\beta_{2}}\cdot m_{52}+\omega^{\beta_{1}}\cdot m_{51}+m_{50}
=\omega^{2}\cdot 1+\omega\cdot5+25.
\end{align*}
So, we have
\begin{eqnarray*}
m_{12}\oplus m_{22}\oplus m_{32}\oplus m_{42}\oplus m_{52} 
&=& 0\oplus 0\oplus 3\oplus  2\oplus  1\\
&=& 0\\
m_{11}\oplus m_{21}\oplus m_{31}\oplus m_{41}\oplus m_{51}
&=& 0\oplus 2\oplus 0\oplus 4\oplus 5\\
&=& 3\\
m_{10}\oplus m_{20}\oplus m_{30}\oplus m_{40}\oplus m_{50}
&=& 1\oplus 4\oplus 9\oplus 16\oplus 25\\
&=& 5.
\end{eqnarray*}
Thus, by the definition of nim-sum in general ordinal number 
\begin{center}
$\alpha_{1}\oplus\alpha_{2}\oplus\alpha_{3}\oplus\alpha_{4}\oplus\alpha_{5}
=\omega\cdot 3+5$.
\end{center}

Therefore, this position is an $\mathcal{N}$-position,
and the legal good move is $\omega\cdot 2+4$ $\rightarrow$ $\omega+1$.
\end{example}
\subsection{Transfinite Welter's Game}
In Transfinite version, the size of the belt of Welter's Game is
extended into general ordinal numbers, but played with finite number of coins. The legal move is to move one coin toward the left (jumping
is allowed), and you cannot place two or more coins on the same square
as in the original Welter's Game (see Fig.~\ref{fig:3}).
We will define Welter function of a position of Transfinite Welter's Game.

 \begin{center}\label{fig:3}
 \begin{tabular}{|c|c|c|c|c|c|c|c|c|c|c}
 \hline
 0&1&$\bullet$&3&$\cdots$&$\omega$&$\bullet$&$\omega+2$&$\cdots  $&$\omega^2$&$\cdots$\\\hline
 \end{tabular}
 \end{center}

\begin{definition}
Let $\alpha_1,\ldots,\alpha_n \in \mathcal{ON}$. Each $\alpha_i$ can be expressed as $\alpha_i=\omega \cdot \lambda_i+m_{i}$, where $\lambda_i \in \mathcal{ON}$ and $m_i \in \mathbb{N}_0$. Welter function in general ordinal numbers is defined as follows:
\begin{center}
$[\alpha_1|\cdots|\alpha_n]
=\omega\cdot(\lambda_1\oplus\cdots \oplus \lambda_n)+\displaystyle \sum_{\lambda \in \mathcal{ON}}^{\oplus}[S_{\lambda}]$,
\end{center}
where $[S_{\lambda}]$ is Welter function, and $S_{\lambda}=\{m_{n}\mid\lambda_n=\lambda \}$.
\end{definition}

We obtain the following main theorem.
\begin{theorem}
Let $\alpha_1,\ldots,\alpha_n \in \mathcal{ON}$. Grundy value of general position $(\alpha_1,\ldots,\alpha_n)$ in Transfinite Welter's Game is equal to its Welter function. Namely, we have the following:
\begin{center}
$\mathcal{G}(\alpha_1,\ldots,\alpha_n)=[\alpha_1|\cdots|\alpha_n]$.
\end{center}
\end{theorem}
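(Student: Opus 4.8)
The plan is to prove the identity by transfinite induction on the position, following the Sprague--Grundy template. Writing $V := [\alpha_1|\cdots|\alpha_n] = \omega\cdot L + M$ with $L := \lambda_1\oplus\cdots\oplus\lambda_n \in \mathcal{ON}$ and $M := \sum_{\lambda}^{\oplus}[S_{\lambda}] \in \mathbb{N}_0$, I would show that (i) no single move leads to a position whose Welter function again equals $V$, and (ii) every ordinal $\mu < V$ is the Welter value of some position reachable in one move. Since each move strictly decreases one of the finitely many coordinates, positions descend in a well-founded (multiset) ordering of ordinals, so every play is finite and the induction is legitimate; by the induction hypothesis I may assume $\mathcal{G}(G') = [G']$ for every option $G'$. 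Granting (i) and (ii), the $\mex$ defining $\mathcal{G}(\alpha_1,\ldots,\alpha_n)$ equals $V$.

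The key structural observation is that a legal move is of exactly one of two types: (a) it lowers the ordinal part, replacing $\lambda_i$ by some $\lambda_i' < \lambda_i$ with arbitrary new finite part $m_i'$; or (b) it keeps all $\lambda_i$ fixed and lowers $m_i$ to $m_i' < m_i$ inside the block $S_{\lambda_i}$. For (i): a type-(b) move is a move in the ordinary Welter game on $S_{\lambda_i}$, and since the Welter function is animating, hence injective in each argument, it strictly changes $[S_{\lambda_i}]$, so the finite part becomes $M \oplus [S_{\lambda_i}] \oplus [S_{\lambda_i}'] \ne M$ while $L$ is fixed. A type-(a) move changes $L$ to $L \oplus \lambda_i \oplus \lambda_i' \ne L$ (as $\lambda_i \ne \lambda_i'$ forces $\lambda_i\oplus\lambda_i'\ne 0$). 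In either case, by the uniqueness of the division $\xi = \omega\cdot q + r$ with $r<\omega$, the new value cannot equal $\omega\cdot L + M = V$, which settles (i).

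For (ii), I would write $\mu = \omega\cdot L' + M'$ with $M' \in \mathbb{N}_0$ and split on the comparison with $V$. If $L' = L$ and $M' < M$: the sub-game obtained by freezing all $\lambda_i$ is the disjunctive sum of the ordinary Welter games on the blocks $S_{\lambda}$, whose Grundy value is $\bigoplus_{\lambda}[S_{\lambda}] = M$ by the Sprague--Grundy sum theorem together with Welter's Theorem; hence some type-(b) move reaches a position with finite part $M'$ and unchanged $L$, i.e. value $\mu$. If $L' < L$: I would first invoke the Transfinite Nim argument to pick an index $i$ and $\lambda_i' < \lambda_i$ making the new ordinal part equal $L'$; the coin then lands in block $\lambda_i'$ with a finite part $m_i'$ still free to choose. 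To aim the finite part at $M'$, the required block value $[S_{\lambda_i'}\cup\{m_i'\}]$ is forced to a specific nonnegative integer $t$, and the bijectivity of the Welter function in one argument, with its nonnegativity clause, yields a unique $m_i' \in \mathbb{N}_0$ distinct from the other members of that block; the move is legal because $\lambda_i'<\lambda_i$ gives $\omega\lambda_i'+m_i' < \omega(\lambda_i'+1) \le \omega\lambda_i \le \alpha_i$ for every $m_i'$.

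The main obstacle I expect is precisely this last coupling when $L' < L$: a single coin move must simultaneously correct the ordinal part (a Transfinite-Nim adjustment) and the finite part, yet relocating the coin from $S_{\lambda_i}$ to $S_{\lambda_i'}$ perturbs two block Welter functions at once. The delicate point is that, once the ordinal part is pinned down by the choice of $i$ and $\lambda_i'$, the finite part must be realizable through the one remaining degree of freedom $m_i'$; this is exactly where the bijectivity and nonnegativity of the Welter function as an animating function are indispensable, and I would take care that the resulting $m_i'$ respects distinctness within its block.
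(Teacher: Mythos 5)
Your proposal is correct and follows essentially the same route as the paper: decompose the Welter value as $\omega\cdot L+M$, classify moves and target values according to whether the ordinal part or only the finite part changes, and combine the Transfinite Nim argument with the bijectivity (and nonnegativity clause) of the Welter function as an animating function in one argument. If anything, your write-up is more complete than the paper's, which omits your step (i) --- the verification that no option has Welter value equal to $V$, needed for the $\mex$ computation --- and disposes of the $L'<L$ case with only the remark that ``we can adjust the finite part,'' precisely the coupling you identify and resolve via the unique choice of $m_i'$.
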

\begin{proof}
Let $[\alpha_1|\cdots|\alpha_n]=\alpha$.
We have to show that, for each $\beta$ ($< \alpha$), there exists a position with Grundy value $\beta$ which is reached by a single move from $(\alpha_1,\ldots,\alpha_n)$.

Let $(\alpha_1,\ldots,\alpha_n) \rightarrow (\beta_1,\ldots,\beta_n)$. Then by the assumption of induction we have
\begin{center}
$\mathcal{G}(\beta_1,\ldots,\beta_n)=[\beta_1|\cdots|\beta_n]$.
\end{center}

If $\alpha=0$, there exist no $\beta$ $(< \alpha)$.
We can assume $\alpha > 0$ and
\begin{center}
$\alpha= \omega\cdot\lambda+a_{0}$ and $\beta= \omega\cdot\lambda'+b_{0}$,
\end{center}
where $\lambda$, $\lambda' \in \mathcal{ON}$, $a_0$, $b_0 \in \mathbb{N}_0$.
Since $\alpha>\beta$，we have
\begin{center}
$(\lambda > \lambda')$ or $(\lambda=\lambda'$ and $a_0 > b_0)$.
\end{center}

In the latter case, since $a_0 =\displaystyle \sum_{\lambda \in \mathcal{ON}}^{\oplus}[S_{\lambda}]> b_0$, from theory of Nim\cite{Bouton}\cite{Sprague}\cite{Grundy}, there exists some $\lambda_0$ and nonnegative integer $c_0$ $(<[S_{\lambda_0}])$ such that
\begin{center}
$a_0\oplus[S_{\lambda_0}]\oplus c_0=b_0$.
\end{center}

Next since $[S_{\lambda_0}]>c_0$, from theory of Welter function\cite{Conway}, there is an index $i$ and $m'_i$ $(<m_i)$ such that
$m_i \in S_{\lambda_0}$ and
$[S'_{\lambda_0}]=c_0$,
where $S'_{\lambda_0}$ is the set obtained from $S_{\lambda_0}$ by replacing $m_i$ with $m'_i$.
Thus, the move from $\alpha_i=\omega\cdot\lambda_i+m_i$ to $\alpha'_i=\omega\cdot\lambda_i+m'_i$ changes its Grundy value from $\alpha=\omega\cdot\lambda+a_0$ to $\beta=\omega\cdot\lambda+b_0$.

In the former case, as in Transfinite Nim, there is an index $i$ and $\lambda'_i$ $(< \lambda)$ such that 
$(\lambda_1,\ldots,\lambda_{i-1},\lambda'_i,\lambda_{i+1},\ldots,\lambda_n)$ 
has Grundy value $\lambda'$ and we can adjust the finite part of $\alpha_i$ so that the resulting Welter function to be $\beta$.

Therefore, for each $\beta$ ($< \alpha$), there is a position reached by a single move from $(\alpha_1,\ldots,\alpha_n)$ and its Grundy value is $\beta$.
\end{proof}

\begin{corollary}\label{transfinite}
A position in Transfinite Welter's Game is a $\mathcal{P}$-position if and only if it satisfies the following conditions:
\[\left \{\begin{array}{cc}
\omega\cdot(\lambda_1\oplus\cdots \oplus \lambda_n)=0\\
\displaystyle \sum_{\lambda \in \mathcal{ON}}^{\oplus}[S_{\lambda}]=0.
  \end{array}\right.\]
\end{corollary}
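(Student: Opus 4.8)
The plan is to derive the corollary directly from the main theorem together with the transfinite Sprague--Grundy theorem. First I would invoke the theorem that for transfinite impartial games $\mathcal{G}(G)=0$ holds if and only if the position is a $\mathcal{P}$-position. Hence a position $(\alpha_1,\ldots,\alpha_n)$ is a $\mathcal{P}$-position exactly when its Grundy number vanishes, and by the main theorem this Grundy number equals the Welter function value $[\alpha_1|\cdots|\alpha_n]$. So the entire task reduces to characterizing when $[\alpha_1|\cdots|\alpha_n]=0$.

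Next I would substitute the definition
\begin{center}
$[\alpha_1|\cdots|\alpha_n]=\omega\cdot(\lambda_1\oplus\cdots\oplus\lambda_n)+\displaystyle\sum_{\lambda\in\mathcal{ON}}^{\oplus}[S_\lambda]$
\end{center}
and observe that this expression is already presented as an ordinal of the form $\omega\cdot\mu+a$, where $\mu=\lambda_1\oplus\cdots\oplus\lambda_n\in\mathcal{ON}$ and $a=\sum^{\oplus}_{\lambda}[S_\lambda]$. The key point is that $a$ is a \emph{finite} ordinal: each $[S_\lambda]$ is an ordinary (finite) Welter function value, hence a nonnegative integer, and a nim-sum of nonnegative integers is again a nonnegative integer. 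Thus the decomposition $\omega\cdot\mu+a$ separates the value into a multiple-of-$\omega$ part and a natural-number remainder.

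The decisive step is then to argue that $\omega\cdot\mu+a=0$ if and only if $\omega\cdot\mu=0$ and $a=0$. This is a consequence of ordinal arithmetic: if $\mu>0$ then $\omega\cdot\mu\geq\omega$, and since ordinal addition satisfies $\omega\cdot\mu+a\geq\omega\cdot\mu$, the sum is strictly positive; if $\mu=0$ the sum equals $a$, which is zero precisely when $a=0$. Equivalently, one may appeal to the uniqueness of the Cantor normal form. Combining this with $\omega\cdot\mu=0\iff\mu=0\iff\lambda_1\oplus\cdots\oplus\lambda_n=0$ yields the two stated conditions.

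I expect the only real obstacle to be making the finiteness of $a$ fully rigorous, that is, confirming that each summand $[S_\lambda]$ lies in $\mathbb{N}_0$ and that the nim-summation over $\lambda\in\mathcal{ON}$ has only finitely many nonzero terms. This is handled by noting that there are at most $n$ distinct values among $\lambda_1,\ldots,\lambda_n$, so all but finitely many $S_\lambda$ are empty with $[S_\lambda]=0$. Once the value is seen to sit in the slot $\omega\cdot\mu+a$ with $a$ finite, the equivalence with the two displayed conditions is immediate.
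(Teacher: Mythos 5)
Your proposal is correct and follows the same route the paper intends: combine the transfinite Sprague--Grundy theorem with the main theorem, then note that $[\alpha_1|\cdots|\alpha_n]=\omega\cdot\mu+a$ vanishes exactly when both the $\omega$-part and the finite part $a=\sum^{\oplus}_{\lambda}[S_{\lambda}]$ vanish. The paper leaves the corollary without an explicit proof precisely because this is the immediate argument; your added care about the finiteness of $a$ (only finitely many nonempty $S_\lambda$, each $[S_\lambda]\in\mathbb{N}_0$) is a welcome point of rigor but not a departure.
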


By this corollary, we can easily calculate a winning move in Transfinite Welter's Game by its Welter function.

\begin{example}
In the case of position $(1,\omega\cdot 2+4,\omega\cdot 2+9,\omega^{2}+\omega \cdot
4+16,\omega^{2}+\omega \cdot 5+25)$:

Let us calculate the value of $[\alpha_{1}|\alpha_{2}|\alpha_{3}|\alpha_{4}|\alpha_{5}]$.
We get
\begin{align*}
\alpha_1&=\omega^{\beta_{2}}\cdot m_{12}+\omega^{\beta_{1}}\cdot m_{11}+m_{10}
=\omega^{2}\cdot 0+\omega\cdot 0+1\\
\alpha_2&=\omega^{\beta_{2}}\cdot m_{22}+\omega^{\beta_{1}}\cdot m_{21}+m_{20}
=\omega^{2}\cdot 0+\omega\cdot 2+4\\
\alpha_3&=\omega^{\beta_{2}}\cdot m_{32}+\omega^{\beta_{1}}\cdot m_{31}+m_{30}
=\omega^{2}\cdot 0+\omega\cdot 2+9\\
\alpha_4&=\omega^{\beta_{2}}\cdot m_{42}+\omega^{\beta_{1}}\cdot m_{41}+m_{40}
=\omega^{2}\cdot 1+\omega\cdot4+16\\
\alpha_5&=\omega^{\beta_{2}}\cdot m_{52}+\omega^{\beta_{1}}\cdot m_{51}+m_{50}
=\omega^{2}\cdot 1+\omega\cdot5+25.
\end{align*}
So, we have
\begin{eqnarray*}
m_{12}\oplus m_{22}\oplus m_{32}\oplus m_{42}\oplus m_{52} 
&=& 0\oplus 0\oplus 0\oplus  1\oplus  1\\
&=& 0\\
m_{11}\oplus m_{21}\oplus m_{31}\oplus m_{41}\oplus m_{51}
&=& 0\oplus 2\oplus 2\oplus 4\oplus 5\\
&=& 1\\
\lbrack{m_{10}}\rbrack\oplus [m_{20} \mid m_{30}] \oplus [m_{40}]\oplus\bigl[m_{50}]
&=&[1]\oplus[4\mid9]\oplus[16]\oplus [25]\\
&=&1\oplus (4\oplus 9-1)\oplus 16\oplus 25\\
&=&4.
\end{eqnarray*}
Therefore, by the definition Welter function for general ordinal number 
\begin{center}
$[\alpha_{1}|\alpha_{2}|\alpha_{3}|\alpha_{4}|\alpha_{5}]=\omega+4$.
\end{center}

Since, this shows that we are in an $\mathcal{N}$-position, we will calculate a winning move.

First, we choose a move that satisfies the first condition of Corollary\ref{transfinite}.
Clearly we should not make a move that will change the coefficient of $\omega^{\beta_2}=\omega^{2}$. 
So we will choose a move that will change the coefficient of $\omega^{\beta_1}=\omega^{1}$ to be $0$.
The same strategy in Transfinite Nim, shows that
\begin{center}
$(2\oplus 2\oplus 4\oplus 5)\oplus 1=1\oplus 1=0$.
\end{center}

Thus, the only legal move is $5$ $\rightarrow$ $5\oplus 1=4$.
So, our good move is in $\omega\cdot 5+25$.
Then，in such moves，we will search for a move that satisfy the second condition．
It is obtained from the knowledge of Welter function.

The finite part should satisfy
\begin{center}
$1\oplus [4\mid9]\oplus [x\mid16]=0$.
\end{center}
So we have 
\begin{center}
$x=6$.
\end{center}

Therefore, the only good move is $\omega\cdot 5+25$ $\rightarrow$ $\omega\cdot 4+6$.

In fact，
\begin{eqnarray*}
m_{12}\oplus m_{22}\oplus m_{32}\oplus m_{42}\oplus m_{52} 
&=& 0\oplus 0\oplus 0\oplus  1\oplus  1\\
&=& 0\\
m_{11}\oplus m_{21}\oplus m_{31}\oplus m_{41}\oplus m_{51}
&=& 0\oplus 2\oplus 2\oplus 4\oplus 4\\
&=& 0\\
\lbrack{m_{10}}\rbrack \oplus [m_{20}\mid m_{30}]\oplus [m_{40}]\oplus[m_{50}]
&=&[1]\oplus[4\mid9]\oplus[6\mid16]\\
&=&1\oplus (4\oplus 9-1)\oplus (6\oplus 16-1)\\
&=&1\oplus 12\oplus 13\\
&=& 0.
\end{eqnarray*}

Thus, this position is a $\mathcal{P}$-position.

\end{example}

\addcontentsline{toc}{section}{{\bf References}}

\end{document}